\documentclass[12pt]{amsart}
\usepackage{amsmath}
\usepackage{amsfonts}
\usepackage[mathscr]{eucal}
\usepackage{amssymb}
\usepackage{mathtools, xypic, color}
\usepackage{enumitem}
\usepackage{hyperref}
\usepackage{xcolor}

\newtheorem{thm}{Theorem}[section]

\newtheorem{cor}[thm]{Corollary}

\newtheorem{lemma}[thm]{Lemma}

\theoremstyle{definition}

\newtheorem{remark}[thm]{Remark}


%
\mathtoolsset{centercolon}

\newcommand{\bb}[1]{\mathbb{#1}}
\newcommand{\cl}[1]{\mathcal{#1}}

\newcommand{\rC}{\mathrm{C}}
\newcommand{\ep}{\varepsilon}
\renewcommand{\phi}{\varphi}


\newcommand{\Ad}{\operatorname{Ad}}

\newcommand{\ol}{\overline}

\newcommand{\ip}[1]{\langle #1 \rangle}
\newenvironment{sbmatrix}{\left[\begin{smallmatrix}}{\end{smallmatrix}\right]}

\newcommand{\qand}{\quad\text{and}\quad}

\newcommand{\qfor}{\quad\text{for}\quad}
\newcommand{\qforal}{\quad\text{for all}\quad}

\begin{document}

\title[Complete spectral sets and numerical range]{Complete spectral sets\\ and numerical range}

\author[K.R. Davidson]{Kenneth R. Davidson}
\address{Pure Mathematics Dept., University of Waterloo,
Waterloo, ON, N2L 3G1, Canada}
\email{krdavids@uwaterloo.ca \vspace{-2ex}}
\thanks{The first author is partially supported by an NSERC grant.}

\author[V.I.~Paulsen]{Vern~I.~Paulsen}
\email{vpaulsen@uwaterloo.ca}
\thanks{The second author is partially supported by an NSERC grant.}

\author[H.J. Woerdeman]{Hugo J.~Woerdeman}
\address{Department of Mathematics \\
Drexel University\\
Philadelphia, PA, 19104\\
U.S.A.}
\email{hugo@math.drexel.edu}
\thanks{The third author is partially supported by a Simons Foundation grant and by the Institute for Quantum Computing at the University of Waterloo.}

\keywords{complete spectral set, numerical range, Crouzeix conjecture}
\subjclass[2010]{Primary 47A12, 47A25, 15A60}

\begin{abstract}
We define the complete numerical radius norm for homomorphisms from any operator algebra into $\cl B(\cl H)$, and show that this norm can be computed explicitly in terms of the completely bounded norm. This is used to show that if $K$ is a complete $C$-spectral set for an operator $T$, then it is a complete $M$-numerical radius set, where $M=\frac12(C+C^{-1})$. In particular, in view of Crouzeix's theorem, there is a universal constant $M$ (less than 5.6) so that if $P$ is a matrix polynomial and $T \in \cl B(\cl H)$, then $w(P(T)) \le M \|P\|_{W(T)}$. When $W(T) = \ol{\bb D}$, we have $M = \frac54$.
\end{abstract}

\maketitle

\stepcounter{section}

In 2007, Michel Crouzeix \cite{Crouzeix} proved the remarkable fact that for any operator $T$ on a Hilbert space $\cl{H}$, 
the numerical range is a complete $C$-spectral set for some constant with a universal bound of 11.08.
Moreover in \cite{Crouzeix2}, he conjectures that the optimal constant is $2$, which is the case for a disc. 
This inspired a result of Drury \cite{Drury} who proved that if the numerical range of $T$ is contained in the disc, then the numerical radius of any polynomial in $T$ is bounded by $\frac54$ times the supremum norm of the polynomial over the disc. 
Generally all that one can say about the relationship between the norm and numerical radius is that $w(X) \le \|X\|$, with equality for many operators,  so the improvement from $2$ to Drury's $\frac54$ was unexpected.

In this note, we establish a precise relationship between the completely bounded norm of a homomorphism of an arbitrary operator algebra and what we call the {\it complete numerical radius norm} of the homomorphism.  When applied to the case of the disc, our relationship yields Drury's result in the matrix polynomial case, and our result also applies to the general study of $C$-spectral sets.

We now introduce the definitions and prior results that we shall need.


A set $K$ is a {\it $C$-spectral set} for an operator $T$ if for every rational function $f$ with poles off of $K$, one has
\[ \| f(T)\| \le C \|f\|_K  \]
where $\|f\|_K = \sup\{ |f(z)| : z \in K\}$. 
It is a {\it complete $C$-spectral set} if this inequality holds for all matrices with rational coefficients.
This inequality clearly extends to $R(K)$, the uniform closure of these rational functions in $\rC(K)$.
It is well-known that when the complement of $K$ is connected (such as when $K$ is convex), it suffices to verify this inequality for polynomials.
When the constant $C=1$, we call $K$ a {\it (complete) spectral set}.
The second author \cite{Paulsen84} showed in 1984 that $K$ is a complete $C$-spectral set for $T$ if and only if there is an invertible operator $S$ so that $K$ is a complete spectral set for $STS^{-1}$ and $\|S\|\,\|S^{-1}\| \le C$.

The numerical range of an operator $T$ is the set
\[ W(T) = \{ \ip{Tx,x} : \|x\|=1 \} .\]
This set is always convex and contains the spectrum of $T$. The numerical radius of $T$ is
\[ w(T) = \sup \{ |\lambda| : \lambda\in W(T) \} .\]
It is well known that $w(T) \le \|T\| \le 2 w(T)$, and that this inequality is sharp.

Early estimates for $w(p(T))$ for a polynomial $p$ in the case of the disc were due to Berger and Stampfli \cite{BS}:
if $w(T) \le 1$ and $p(0)=0$, then $w(p(T)) \le \|p\|_{\bb{D}}$. A recent result of Drury \cite{Drury} deals with the case of $p(0) \ne 0$:
\[ w(p(T)) \le \frac54 \|p\|_{\bb{D}} .\]
This inequality can be seen to be sharp for $T = \begin{sbmatrix}0&2\\0&0\end{sbmatrix}$ and the M\"obius map that takes $0$ to $1/2$.
Drury gives more precise information about the shape of $W(p(T))$ as a ``teardrop". For another proof of this fact, see \cite{KMR}.
Since we are primarily interested in matrix polynomials, this geometric picture is no longer valid, but we prove the same $\frac54$ inequality.

We will say that a compact set $K$ is a {\it $C'$-numerical radius set} for $T$ if 
\[ w(f(T)) \le C' \|f\|_K \qforal f \in R(K) ,\]
and it is a {\it complete $C'$-numerical radius set} if the same inequality holds for matrices over $R(K)$. 
One of our key results is the following:
\\

{\bf Theorem 3.1.}
{\it Let $C\ge1$ and set $C'=\frac12(C+C^{-1})$.
A compact subset $K\subset \bb C$ is a complete $C$-spectral set for $T\in \cl B(\cl H)$ 
if and only if it is a complete $C'$-numerical radius set for $T$.}

In fact, we will prove a general result about unital operator algebras. 
Recall that every unital operator algebra $\cl A$ has a family of norms on $\cl M_n(\cl A)$, and that $\cl A$ may be represented completely isometrically as an algebra of operators on some Hilbert space. See \cite{Paulsen} for details.
If $\Phi:\cl A \to \cl B(\cl H)$ is a bounded linear map, it induces coordinatewise maps 
$\Phi^{(n)}:\cl M_n(\cl A) \to \cl M_n(\cl B(\cl H)) \simeq \cl B(\cl H^{(n)})$; and one defines the {\it completely bounded norm} by
\[ \|\Phi\|_{cb} = \sup_{n\ge1} \|\Phi^{(n)}\| .\]
We will also define a {\it complete numerical radius norm} on such maps
\[ \|\Phi\|_{wcb} := \sup_{n\ge1} \sup_{A \in \cl M_n(\cl A),\ \|A\|\le 1} w(\Phi^{(n)}(A)) .\]
Our main result is the following:
\\

{\bf Theorem 2.3.}
{\it Let $\cl A$ be an operator algebra, and let $\Phi$ be a completely bounded homomorphism. Then
\[ \|\Phi\|_{wcb} = \frac12 \big( \|\Phi\|_{cb} + \|\Phi\|_{cb}^{-1} \big) .\]}

\bigbreak
We complete this introduction with a bit more background on the numerical radius.
Ando \cite{Ando} provided some useful characterizations of $w(T)$. One we require is Ando's numerical radius formula:
\[ w(T) =  \min \Big\{ \tfrac12 \|A+B\| : \begin{bmatrix}A&T\\T^*&B\end{bmatrix} \ge 0 \Big\} . \]

Numerical range is intimately connected to dilation theory.
The first such result was due to Berger \cite{Berger}, who showed that if $w(T) \le 1$, then there is a unitary operator $U$ on a Hilbert space $\cl{K}$ containing $\cl{H}$ such that
\[ T^n = 2 P_\cl{H} U^n |_\cl{H} \qfor n\ge1 .\]
Easy examples show that the converse is not valid. 
Sz.Nagy and Foia\c{s} \cite{SNF} introduced the notion of a $\rho$-contraction (for $\rho \ge1$) as an operator $T$ for which there is a $\rho$-dilation, meaning a unitary operator $U$ on $\cl K \supset\cl H$ such that
\[ T^n = \rho P_\cl{H} U^n |_\cl{H} \qfor n\ge1 .\]
Okubo and Ando \cite{OA} show that if $T$ is a $\rho$-contraction, then  there is an invertible $S$ so that $\|STS^{-1}\| \le 1$ and $\|S\|\,\|S^{-1}\| \le \rho$.
By the remarks above, this shows that $\ol{\bb D}$ is a complete $\rho$-spectral set for $T$.
In particular, if $w(T)\le 1$, then $T$ is a 2-contraction by Berger's dilation, and thus $\ol{\bb D}$ is a complete 2-spectral set for $T$.

In view of the work of Crouzeix \cite{Crouzeix}, there has been a lot of renewed interest in numerical range.
See the monograph \cite{GR} and the recent survey  \cite{BBsurvey} for many relevant references.

\section{The main theorem}

We begin with a key observation which yields one direction of our theorem.

\begin{lemma} \label{L:num_range}
If $\|T\| \le 1$ and $\|S\| \, \|S^{-1}\| \le C$, then 
\[ w(S^{-1}TS) \le \frac12(C+C^{-1}) .\]
\end{lemma}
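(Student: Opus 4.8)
The plan is to use Ando's numerical radius formula, quoted in the introduction, which says that $w(X) \le \frac12\|A+B\|$ whenever the $2\times 2$ operator matrix $\begin{bmatrix} A & X \\ X^* & B\end{bmatrix}$ is positive semidefinite. So it suffices to exhibit operators $A$ and $B$ with $\begin{bmatrix} A & S^{-1}TS \\ (S^{-1}TS)^* & B\end{bmatrix}\ge 0$ and $\|A+B\| \le C + C^{-1}$. The natural guess, since $\|T\|\le 1$ forces $\begin{bmatrix} I & T \\ T^* & I\end{bmatrix}\ge 0$, is to conjugate this positive matrix by a suitable block-diagonal invertible operator to move $T$ to $S^{-1}TS$.

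Concretely, I would consider $R = \begin{bmatrix} S^{-1} & 0 \\ 0 & S^* \end{bmatrix}$ and form
\[
R \begin{bmatrix} I & T \\ T^* & I \end{bmatrix} R^*
= \begin{bmatrix} S^{-1}(S^{-1})^* & S^{-1}TS \\ (S^{-1}TS)^* & S^*S \end{bmatrix},
\]
which is positive semidefinite since $\begin{bmatrix} I & T \\ T^* & I \end{bmatrix}\ge 0$. Taking $A = S^{-1}(S^*)^{-1} = (S^*S)^{-1}$... wait, better: $A = S^{-1}(S^{-1})^* = (S^*S)^{-1}$ is not quite right either; $S^{-1}(S^{-1})^* = (S^*)^{-1}S^{-1} = (SS^*)^{-1}$. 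In any case, set $A = S^{-1}(S^{-1})^*$ and $B = S^*S$. Then $\|B\| = \|S\|^2$ and $\|A\| = \|S^{-1}\|^2$, so by the triangle inequality $\|A+B\| \le \|S\|^2 + \|S^{-1}\|^2$. This is not yet the bound I want, so the key remaining step is a normalization: replace $S$ by $tS$ for a scalar $t>0$, which does not change $S^{-1}TS$, and optimize. With $tS$ in place of $S$ we get $\|A+B\| \le t^{-2}\|S^{-1}\|^2 + t^2\|S\|^2$, minimized at $t^2 = \|S^{-1}\|/\|S\|$, giving $\|A+B\| \le 2\|S\|\,\|S^{-1}\| \le 2C$, hence $w(S^{-1}TS)\le C$. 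That bound is too weak; I need the sharper $\frac12(C+C^{-1})$.

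The fix is to be smarter about the off-diagonal normalization. Note that positivity of $\begin{bmatrix} A & X \\ X^* & B\end{bmatrix}$ is unaffected if we scale $X$ by a unimodular constant, but more usefully, I should start not from $\begin{bmatrix} I & T \\ T^* & I\end{bmatrix}$ but from $\begin{bmatrix} \alpha I & T \\ T^* & \alpha^{-1} I\end{bmatrix}$, which is still $\ge 0$ for any $\alpha>0$ since $\|T\|\le 1$ (its Schur complement is $\alpha^{-1}I - \alpha^{-1}T^*T \ge 0$). Conjugating by $\begin{bmatrix} S^{-1} & 0 \\ 0 & S^*\end{bmatrix}$ and then rescaling $S \mapsto tS$ gives $A+B$ with norm at most $\alpha t^{-2}\|S^{-1}\|^2 + \alpha^{-1}t^2\|S\|^2$. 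Choosing $t$ to make the two terms equal reduces this to $2\sqrt{\alpha \cdot \alpha^{-1}}\,\|S\|\,\|S^{-1}\|$ — still just $2C$. So a single conjugation is genuinely insufficient, which tells me the real argument must instead choose $A$ and $B$ that are \emph{not} simply $S^{-1}(S^{-1})^*$ and $S^*S$.

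Here is what I expect to be the actual route, and the main obstacle. By the Sz.-Nagy–Foiaş/Okubo–Ando circle of ideas combined with Ando's formula, the sharp statement should come from the observation that $w(X)\le c$ iff $\begin{bmatrix} cI & X \\ X^* & cI\end{bmatrix}\ge 0$ \emph{is false in general} — rather, $w(X)\le c$ iff there exist $A,B\ge 0$ with $A+B \le 2cI$ and $\begin{bmatrix} A & X\\ X^* & B\end{bmatrix}\ge 0$, and one can always take $A=B$. So it suffices to find a single positive $H$ with $\begin{bmatrix} H & S^{-1}TS \\ (S^{-1}TS)^* & H\end{bmatrix}\ge 0$ and $\|H\| \le \frac12(C+C^{-1})$. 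Write $S^{-1}TS$ and look for $H$ of the form $H = \beta S^{-1}(S^{-1})^* + \gamma S^*S$; positivity of the block matrix becomes, after conjugating back by $\mathrm{diag}(S, (S^*)^{-1})$, a condition of the form $\begin{bmatrix} \beta I + \gamma S^*S\cdot S^*S & \cdots \\ \cdots & \cdots\end{bmatrix}$ — getting messy. Cleaner: use polar decomposition $S = U|S|$; since $U$ is unitary it does not affect numerical radius, so WLOG $S=|S|\ge 0$, and by the functional calculus/spectral theorem we may further reduce (via the argument in \cite{Paulsen84}) to the case where $S$ is diagonalizable with spectrum in $[\|S^{-1}\|^{-1}, \|S\|]$; after the scalar normalization $S\mapsto tS$ we can assume the spectrum of $S$ lies in $[C^{-1/2}, C^{1/2}]$ with $\|S\|\,\|S^{-1}\|=C$. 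Then $\begin{bmatrix} I & T \\ T^* & I\end{bmatrix}\ge 0$ conjugated by $\mathrm{diag}(S^{-1}, S)$ gives $\begin{bmatrix} S^{-2} & S^{-1}TS \\ (S^{-1}TS)^* & S^2\end{bmatrix}\ge 0$; now average with the version conjugated by $\mathrm{diag}(S, S^{-1})$, namely $\begin{bmatrix} S^2 & STS^{-1}... \end{bmatrix}$ — but that has a \emph{different} corner, not $S^{-1}TS$. The correct symmetrization is to note $\begin{bmatrix} S^{-2} & S^{-1}TS \\ (S^{-1}TS)^* & S^2\end{bmatrix} \ge 0$ and apply Ando's formula directly with $A = S^{-2}$, $B = S^2$: then $\|A+B\| = \sup_{\lambda \in \sigma(S)} (\lambda^{-2} + \lambda^2)$, and since $\sigma(S)\subseteq [C^{-1/2}, C^{1/2}]$ the function $\lambda \mapsto \lambda^2 + \lambda^{-2}$ attains its max at the endpoints, giving $\|A+B\| \le C + C^{-1}$, hence $w(S^{-1}TS) \le \frac12(C+C^{-1})$ as desired. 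The main obstacle — and the one place care is genuinely needed — is justifying the reduction to $S\ge 0$ with controlled spectrum: conjugation by the unitary part of $S$ is harmless for $w(\cdot)$, and the scalar rescaling $S\mapsto tS$ leaves $S^{-1}TS$ unchanged while letting us arrange $\|S\| = \|S^{-1}\|^{-1}\cdot C^{?}$... precisely, choose $t = (\|S\|\,\|S^{-1}\|)^{-1/2}\cdot$(something) so that $\|tS\| = C^{1/2}$ and $\|(tS)^{-1}\| \le C^{1/2}$ using $\|S\|\,\|S^{-1}\|\le C$. Once that normalization is in hand, the endpoint estimate for $\lambda^2+\lambda^{-2}$ on $[C^{-1/2},C^{1/2}]$ is immediate and the proof closes.
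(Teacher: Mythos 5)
Your final argument is correct and is essentially the paper's own proof: reduce to $S=|S|>0$ via the polar decomposition (indeed $S^{-1}TS=|S|^{-1}(U^*TU)|S|$ exactly), rescale so that $C^{-1/2}I\le S\le C^{1/2}I$, conjugate $\begin{bmatrix} I & T\\ T^* & I\end{bmatrix}$ by $\operatorname{diag}(S^{-1},S)$, and apply Ando's formula with $A=S^{-2}$, $B=S^2$, estimating $\tfrac12\|S^2+S^{-2}\|\le\sup\{\tfrac12(t+t^{-1}):C^{-1}\le t\le C\}=\tfrac12(C+C^{-1})$. The exploratory detours (the $2C$ bounds and the aside about diagonalizability) are unnecessary, but the route you settle on matches the paper.
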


\begin{proof}
Using polar decomposition, $S=U|S|$, we may replace $T$ by the unitarily equivalent $U^*TU$ and suppose that $S>0$.
After scaling, we may suppose that $C^{-1/2}I \le S \le C^{1/2}I$.
Since $\|T\|\le1$ we have that
\[
 \begin{bmatrix}0 &0\\0&0 \end{bmatrix} \le
 \begin{bmatrix}S^{-1} &0\\0& S \end{bmatrix}
 \begin{bmatrix}I & T\\ T^* & I \end{bmatrix}
 \begin{bmatrix}S^{-1} &0\\0& S \end{bmatrix} =
 \begin{bmatrix}S^{-2} & S^{-1}TS\\ ST^*S^{-1}& S^2 \end{bmatrix}
\]
By  Ando's numerical radius formula, we obtain that
\begin{align*}
  w(S^{-1}TS) &\le \frac12 \| S^{-2} + S^2 \| \\
  &\le \sup \{ \frac12(t+t^{-1}) : C^{-1} \le t \le C\} \\
  &= \frac12(C+C^{-1}) . \qedhere
\end{align*}
\end{proof}

To establish the converse, we first need a simple computational lemma.

\begin{lemma} \label{L:2x2}
Let $B \in \cl B(\cl H)$ and let $T = \begin{bmatrix}\alpha I & B\\0& \alpha I \end{bmatrix} $. Then
\[ 
\|T\| = \frac{ \|B\| + \sqrt{\|B\|^2 + 4 |\alpha|^2}}2 \qand w(T) = |\alpha| +\tfrac12 \|B\|.
\] 
In particular, $\|T\|=1$ if and only if $|\alpha|^2+\|B\|=1$.
\end{lemma}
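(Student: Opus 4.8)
The plan is to compute the norm and numerical radius of a $2\times 2$ upper-triangular operator matrix with equal diagonal entries by reducing to scalar computations. For the norm, I would first observe that $T = \alpha I + N$ where $N = \begin{bmatrix}0 & B\\ 0 & 0\end{bmatrix}$, so $\|T\|^2 = \|TT^*\| = \|T^*T\|$; computing $T^*T = \begin{bmatrix}|\alpha|^2 I & \bar\alpha B\\ \alpha B^* & |\alpha|^2 I + B^*B\end{bmatrix}$ is messy, so instead I would diagonalize the behaviour on the two orthogonal summands. The cleaner route: for a vector $\begin{bmatrix}x\\y\end{bmatrix}$ one has $\|T(x,y)\|^2 = \|\alpha x + By\|^2 + |\alpha|^2\|y\|^2$, and by rotating $B = \ol{V}|B|$ via partial isometries one may assume $B \ge 0$, indeed (passing to the spectral decomposition of $|B|$ and then to a single spectral value, using that the supremum over unit vectors is governed by $\|B\|$) reduce to the $2\times 2$ scalar matrix $\begin{bmatrix}|\alpha| & \|B\|\\ 0 & |\alpha|\end{bmatrix}$. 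For a scalar $2\times 2$ matrix $\begin{bmatrix}a & b\\ 0 & a\end{bmatrix}$ with $a,b\ge 0$, the singular values are the square roots of the eigenvalues of $\begin{bmatrix}a^2 & ab\\ ab & a^2+b^2\end{bmatrix}$, namely $\tfrac12\big(2a^2+b^2 \pm b\sqrt{b^2+4a^2}\big)$, and the larger singular value is exactly $\tfrac12\big(b + \sqrt{b^2+4a^2}\big)$; substituting $a = |\alpha|$, $b = \|B\|$ gives the stated formula for $\|T\|$.

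For the numerical radius, I would use Ando's numerical radius formula, or more elementarily the fact that $w(T) = \sup_\theta \|\re(e^{i\theta}T)\|$ where $\re X = \tfrac12(X+X^*)$. Writing $e^{i\theta}T = e^{i\theta}\alpha I + \begin{bmatrix}0 & e^{i\theta}B\\ 0 & 0\end{bmatrix}$, we get $\re(e^{i\theta}T) = \re(e^{i\theta}\alpha) I + \begin{bmatrix}0 & \tfrac12 e^{i\theta}B\\ \tfrac12 e^{-i\theta}B^* & 0\end{bmatrix}$, and the off-diagonal selfadjoint block has norm $\tfrac12\|B\|$, so $\|\re(e^{i\theta}T)\| \le |\re(e^{i\theta}\alpha)| + \tfrac12\|B\| \le |\alpha| + \tfrac12\|B\|$ with equality attainable by choosing $\theta$ so that $e^{i\theta}\alpha = |\alpha|$ and then testing on a suitable vector (pairing a unit vector nearly achieving $\|B\|$ for $|B|$ with the matching singular vector). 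Taking the supremum over $\theta$ yields $w(T) = |\alpha| + \tfrac12\|B\|$. Alternatively this follows directly from Lemma~\ref{L:2x2}'s scalar case, since numerical radius is not reduced by compression to the relevant two-dimensional subspace, and for the scalar matrix $\begin{bmatrix}|\alpha| & \|B\|\\ 0 & |\alpha|\end{bmatrix}$ the numerical range is the disc centred at $|\alpha|$ of radius $\tfrac12\|B\|$, whence $w = |\alpha| + \tfrac12\|B\|$.

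The final ``in particular'' is then immediate algebra: setting $\|T\| = 1$ gives $\|B\| + \sqrt{\|B\|^2 + 4|\alpha|^2} = 2$, i.e. $\sqrt{\|B\|^2+4|\alpha|^2} = 2 - \|B\|$ (note $\|B\|\le 2$ here), and squaring yields $\|B\|^2 + 4|\alpha|^2 = 4 - 4\|B\| + \|B\|^2$, that is $|\alpha|^2 = 1 - \|B\|$, or $|\alpha|^2 + \|B\| = 1$; conversely this relation forces $\|T\| = 1$ by running the computation backwards.

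I expect the only real obstacle to be the reduction from operator $B$ to the scalar value $\|B\|$: one must argue carefully that replacing $B$ by $\|B\|$ times a rank-one partial isometry neither increases the norm/numerical radius (clear, by the triangle-type estimates above) nor decreases them below the target (needs an approximate-eigenvector argument for $|B|$, choosing unit vectors $y_k$ with $\|By_k\| \to \|B\|$ and feeding the pair $(By_k/\|By_k\|, y_k)$, suitably phased, into the quadratic form). Everything else is routine scalar $2\times 2$ spectral computation.
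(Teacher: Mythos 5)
Your proposal is correct and follows essentially the same route as the paper: both reduce the norm to a $2\times 2$ scalar computation (the paper via the self-adjoint matrix $\begin{sbmatrix}\|B\|&|\alpha|\\ |\alpha|&0\end{sbmatrix}$, you via the singular values of $\begin{sbmatrix}|\alpha|&\|B\|\\ 0&|\alpha|\end{sbmatrix}$, which give the same value), and both identify $W(T)$ as the translate by $\alpha$ of the radius-$\tfrac12\|B\|$ disc coming from the nilpotent block. Your treatment via $\sup_\theta\|\re(e^{i\theta}T)\|$ and approximate norming vectors simply fills in, a bit more carefully, the steps the paper dismisses as ``straightforward'' and ``easy to see.''
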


\begin{proof}
It is straightforward to show that $\|T\| = \left\| \begin{sbmatrix} \|B\|&|\alpha| \\ |\alpha| &0 \end{sbmatrix} \right\|$,
and computation of the eigenvalues of this self-adjoint matrix yields the desired formula. Routine manipulation now shows that
$\|T\|=1$ if and only if $|\alpha|^2+\|B\|=1$. 
It is also easy to see that $W\left(  \begin{sbmatrix} 0&B\\ 0 &0 \end{sbmatrix} \right) = W\left(  \begin{sbmatrix} 0&\|B\|\\ 0 &0 \end{sbmatrix} \right)$ is a disc centred at 0 of radius $\|B\|/2$.
Hence $W(T) = \alpha + \frac{\|B\|}2 \ol{\bb D}$, and therefore $w(T) = |\alpha| + \frac12 \|B\|$.
\end{proof}

\begin{thm} \label{T:main}
Let $\cl A$ be a unital operator algebra, and let $\Phi$ be a unital completely bounded homomorphism. Then
\[ \|\Phi\|_{wcb} = \frac12 \big( \|\Phi\|_{cb} + \|\Phi\|_{cb}^{-1} \big) .\]
\end{thm}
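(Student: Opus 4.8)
The plan is to prove the two inequalities separately, letting $C = \|\Phi\|_{cb}$ and writing $M = \frac12(C + C^{-1})$. The easy direction is $\|\Phi\|_{wcb} \le M$. Since $\Phi$ is a completely bounded unital homomorphism, by the Paulsen similarity theorem there is an invertible $S \in \cl B(\cl H)$ with $\|S\|\,\|S^{-1}\| \le C$ such that $\Psi(\cdot) := S\Phi(\cdot)S^{-1}$ is completely contractive. Then for any $n$ and any $A \in \cl M_n(\cl A)$ with $\|A\| \le 1$ we have $\|\Psi^{(n)}(A)\| \le 1$, and $\Phi^{(n)}(A) = (S^{-1} \otimes I_n)\,\Psi^{(n)}(A)\,(S \otimes I_n)$ is a similarity of a contraction by an invertible whose norm times its inverse's norm is still at most $C$. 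Lemma~\ref{L:num_range} then gives $w(\Phi^{(n)}(A)) \le M$, and taking the supremum over $n$ and $A$ yields $\|\Phi\|_{wcb} \le M$.

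For the reverse inequality $\|\Phi\|_{wcb} \ge M$, the idea is to feed $\Phi$ a cleverly chosen $2 \times 2$ upper-triangular matrix over $\cl A$ and apply Lemma~\ref{L:2x2}. Fix $\ep > 0$ and choose $n$ and $B \in \cl M_n(\cl A)$ with $\|B\| = 1$ and $\|\Phi^{(n)}(B)\| \ge C - \ep$ (possible by definition of the cb norm, using that $\|\Phi\|_{cb}$ is attained in the limit over matrix levels). Pick a scalar $\alpha$ with $|\alpha|^2 + \|B\| = 1$, i.e. $\alpha = 0$ here since $\|B\|=1$ — so instead rescale: replace $B$ by $\delta B$ and set $|\alpha|^2 = 1 - \delta$, so that the $2n \times 2n$ matrix
\[
 A = \begin{bmatrix} \alpha I_n & \delta B \\ 0 & \alpha I_n \end{bmatrix} \in \cl M_{2n}(\cl A)
\]
has $\|A\| = 1$ by the ``in particular'' clause of Lemma~\ref{L:2x2} (applied with $\cl A$ represented completely isometrically on a Hilbert space, where this computation is valid). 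Since $\Phi$ is a unital homomorphism, $\Phi^{(2n)}(A) = \begin{sbmatrix} \alpha I & \delta\,\Phi^{(n)}(B) \\ 0 & \alpha I \end{sbmatrix}$, and Lemma~\ref{L:2x2} gives $w(\Phi^{(2n)}(A)) = |\alpha| + \frac{\delta}{2}\|\Phi^{(n)}(B)\| \ge \sqrt{1-\delta} + \frac{\delta}{2}(C - \ep)$. Thus $\|\Phi\|_{wcb} \ge \sqrt{1-\delta} + \frac{\delta}{2}(C-\ep)$; optimizing over $\delta \in [0,1]$ (the maximum of $g(\delta) = \sqrt{1-\delta} + \frac{c\delta}{2}$ occurs at $1 - \delta = 1/c^2$, giving value $\frac12(c + c^{-1})$ when $c \ge 1$) and letting $\ep \to 0$ yields $\|\Phi\|_{wcb} \ge \frac12(C + C^{-1}) = M$.

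I expect the main subtlety to lie in two places. First, one must be careful that $C = \|\Phi\|_{cb} \ge 1$ (true since $\Phi$ is unital, so $\|\Phi\|_{cb} \ge \|\Phi(I)\| = 1$), which is exactly what makes $t \mapsto \frac12(t + t^{-1})$ increasing on $[C^{-1}, C]$ in Lemma~\ref{L:num_range} and makes the optimization in the lower bound land at an interior point. Second, the optimization over $\delta$ needs the optimal $1-\delta = 1/c^2$ to lie in $[0,1]$, which again holds precisely because $c \ge 1$; one should also handle the degenerate case $C = 1$ (where $M = 1$ and both bounds are immediate since $w \le \|\cdot\|$ always, and $\Phi$ is then completely contractive). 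The rest is bookkeeping: making sure the matrix-level computations in Lemma~\ref{L:2x2} transfer correctly to $\cl M_{2n}(\cl A)$ via a completely isometric representation, and that the homomorphism property is what delivers the clean upper-triangular form of $\Phi^{(2n)}(A)$.
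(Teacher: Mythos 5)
Your proposal is correct and follows essentially the same route as the paper: Paulsen's similarity theorem plus Lemma~\ref{L:num_range} for the upper bound, and a $2n\times 2n$ upper-triangular matrix with scalar diagonal fed into Lemma~\ref{L:2x2} for the lower bound. The only cosmetic difference is that you keep $\delta$ as a free parameter and optimize at the end, whereas the paper plugs in the optimizer directly (taking $\alpha = C^{-1}$ and off-diagonal entry $(1-C^{-2})A$), which yields the same bound $\frac12(C+C^{-1})$ after letting $\ep\to 0$.
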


\begin{proof}
Let $C = \|\Phi\|_{cb}$. 
By Paulsen's similarity theorem \cite{Paulsen84}, there is an invertible operator $S$ so that
$\Ad S \circ \Phi$ is completely contractive and $\|S\|\,\|S^{-1}\| = C$. 
(Here $\Ad S(T) = STS^{-1}$.)
Let $A \in \cl M_n(\cl A)$ with $\|A\|=1$. 
Then $T := (\Ad S\circ\Phi)^{(n)}(A)$ satisfies $\|T\|\le1$ and $\Phi(A) = \Ad S^{-1(n)}(T)$. 
Hence by Lemma~\ref{L:num_range}, $w(\Phi^{(n)}(A)) \le \frac12(C+C^{-1})$.
Thus  
\[ \|\Phi\|_{wcb} \le \frac12(C+\frac1C) .\]

Conversely, suppose that $A \in \cl M_n(\cl A)$ with $\|A\|=1$ such that $\|\Phi^{(n)}(A)\| > C-\ep$ for some $\ep>0$.
Define $B \in \cl M_{2n}(\cl A)$ by
\[ B = \begin{bmatrix}C^{-1} I_n&(1-C^{-2})A\\0&C^{-1} I_n \end{bmatrix} .\]
Then by Lemma~\ref{L:2x2}, $\|B\|=1$.  Moreover by the second part of that lemma,
\begin{align*}
 \|\Phi\|_{wcb} &\ge w(\Phi^{(2n)}(B)) \\
 &= w\Big(  \begin{bmatrix}C^{-1} I_n&(1-C^{-2})\Phi^{(n)}(A)\\0&C^{-1} I_n \end{bmatrix} \Big) \\
 &> C^{-1} + \frac12 (1-C^{-2}) (C-\ep) \\
 &> \frac12(C + \frac1C) - \frac\ep2. 
\end{align*}
As $\ep>0$ was arbitrary, we obtain 
\[ \|\Phi\|_{wcb} = \frac12(C+\frac1C) = \frac12 \big( \|\Phi\|_{cb} + \|\Phi\|_{cb}^{-1} \big) . \qedhere\]
\end{proof}
\begin{remark} Inverting the above function shows that for a unital homomorphism $\Phi$,
\[ \|\Phi\|_{cb} = \|\Phi\|_{wcb} + \sqrt{\|\Phi\|_{wcb}^2 - 1}.\]
\end{remark}

\section{Consequences}

As an immediate application, we obtain the second theorem stated in the introduction.
Note that convexity of $K$ is not required.

\begin{thm} \label{T:comp_spectral}
Let $C\ge1$ and set $C'=\frac12(C+C^{-1})$.
A compact subset $K\subset \bb C$ is a complete $C$-spectral set for $T\in \cl B(\cl H)$ 
if and only if it is a complete $C'$-numerical radius set for $T$.
\end{thm}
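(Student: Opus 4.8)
The plan is to deduce Theorem~\ref{T:comp_spectral} directly from Theorem~\ref{T:main} by passing to the natural unital homomorphism associated with the rational functional calculus on $K$. Concretely, let $\cl A = R(K)$, viewed as a unital operator algebra with the operator algebra structure it inherits from $\rC(K)$ (so that $\|\cdot\|$ on $\cl M_n(R(K))$ is the sup-norm of the matrix-valued function over $K$), and define $\Phi : R(K) \to \cl B(\cl H)$ by $\Phi(f) = f(T)$. This is a unital homomorphism. The definition of a complete $C$-spectral set says exactly that $\|\Phi^{(n)}(F)\| \le C\|F\|_K$ for all $F \in \cl M_n(R(K))$, i.e.\ that $\|\Phi\|_{cb} \le C$; conversely $\|\Phi\|_{cb}\le C$ gives back the complete $C$-spectral set property. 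Similarly, $K$ is a complete $C'$-numerical radius set for $T$ precisely when $w(\Phi^{(n)}(F)) \le C'\|F\|_K$ for all matrices $F$ over $R(K)$, which is the statement $\|\Phi\|_{wcb} \le C'$.

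With this dictionary in place, the proof is short. First I would observe that $\Phi$ is automatically completely bounded under either hypothesis: if $K$ is a complete $C$-spectral set then $\|\Phi\|_{cb}\le C<\infty$ by definition, and if $K$ is a complete $C'$-numerical radius set then, using the elementary bound $\|X\|\le 2w(X)$ on $\cl B(\cl H^{(n)})$, we get $\|\Phi^{(n)}(F)\| \le 2w(\Phi^{(n)}(F)) \le 2C'\|F\|_K$, so again $\|\Phi\|_{cb}\le 2C'<\infty$. Hence in both directions Theorem~\ref{T:main} applies and yields the identity $\|\Phi\|_{wcb} = \tfrac12(\|\Phi\|_{cb} + \|\Phi\|_{cb}^{-1})$.

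Now I would run the monotonicity argument. The function $g(x) = \tfrac12(x + x^{-1})$ is strictly increasing on $[1,\infty)$, and $g(C) = C'$ by definition. Suppose $K$ is a complete $C$-spectral set, so $\|\Phi\|_{cb}\le C$; combined with $\|\Phi\|_{cb}\ge 1$ (which holds because $\Phi$ is unital, forcing $\|\Phi^{(n)}(F)\|\ge\ldots$; more simply $\|\Phi\|_{cb}\ge\|\Phi(1)\|=1$) and the monotonicity of $g$ on $[1,\infty)$, we get $\|\Phi\|_{wcb} = g(\|\Phi\|_{cb}) \le g(C) = C'$, which says $K$ is a complete $C'$-numerical radius set. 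Conversely, if $K$ is a complete $C'$-numerical radius set then $\|\Phi\|_{wcb}\le C'$, and since the Remark after Theorem~\ref{T:main} gives $\|\Phi\|_{cb} = \|\Phi\|_{wcb} + \sqrt{\|\Phi\|_{wcb}^2 - 1}$, which is increasing in $\|\Phi\|_{wcb}$ on $[1,\infty)$ and sends $C'\mapsto C$, we conclude $\|\Phi\|_{cb}\le C$, i.e.\ $K$ is a complete $C$-spectral set.

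The only genuine subtlety — and the step I would be most careful about — is the reduction to the unital operator algebra framework: one must be sure that $R(K)$ carries a canonical operator algebra structure whose matrix norms are the sup-norms over $K$ (it does, as a unital subalgebra of the C*-algebra $\rC(K)$), and that the rational functional calculus $f\mapsto f(T)$ is a genuine homomorphism on all of $R(K)$ (it is, by the definition of $C$-spectral set extending to the uniform closure as noted in the introduction). Everything else is the bookkeeping above. I should also note explicitly, as the statement emphasizes, that convexity of $K$ plays no role here — it is only relevant if one wants to replace "rational functions with poles off $K$" by "polynomials."
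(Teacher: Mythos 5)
Your proposal is correct and follows essentially the same route as the paper: identify the complete spectral/numerical radius conditions with $\|\Phi_T\|_{cb}\le C$ and $\|\Phi_T\|_{wcb}\le C'$ for the unital homomorphism $\Phi_T(f)=f(T)$ on $R(K)$, use $\|X\|\le 2w(X)$ to get complete boundedness in the converse direction, and then invoke Theorem~\ref{T:main} together with the monotonicity of $x\mapsto\frac12(x+x^{-1})$ on $[1,\infty)$. You simply spell out the monotonicity/inversion step and the unitality bound $\|\Phi_T\|_{cb}\ge1$ more explicitly than the paper does.
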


\begin{proof}
If $K$ is a complete $C$-spectral set for $T$, then the map $\Phi_T(f) = f(T)$ for $f \in R(K)$ has $\|\Phi_T\|_{cb} \le C$.
Hence by Theorem~\ref{T:main},
\[ \|\Phi_T\|_{wcb} = \frac12 \big( \|\Phi_T\|_{cb} + \|\Phi_T\|_{cb}^{-1} \big) \le \frac12(C+C^{-1}) = C' . \]
Thus $K$ is a complete $C'$-numerical radius set for $T$.

Conversely, since $\|A\| \le 2 w(A)$, if $K$ is a complete $C'$-spectral set for $T$, it follows that $\Phi$ is completely bounded.
Then 
\[ \frac12(C+C^{-1}) = C' \ge \|\Phi\|_{wcb} =  \frac12 \big( \|\Phi\|_{cb} + \|\Phi\|_{cb}^{-1} \big) \]
implies that $\|\Phi_T\|_{cb} \le C$. So $K$ is a complete $C$-spectral set for $T$.
\end{proof}

We apply this to the family of $C_\rho$-contractions. For these operators, the set $K$ is the unit disc.

\begin{cor} \label{C:rho}
Suppose that $T$ is a $C_\rho$-contraction for $\rho \ge1$. 
If $F:\bb D \to M_n$ is a matrix polynomial $($or has coefficients in $A(\bb D))$, then 
\[ w(F(T)) \le \frac12(\rho+\rho^{-1}) \|F\|_\infty .\]
\end{cor}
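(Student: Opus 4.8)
The plan is to obtain this as an essentially immediate consequence of Theorem~\ref{T:comp_spectral}, the only extra ingredient being the already-cited fact that $\ol{\bb D}$ is a complete $\rho$-spectral set for a $C_\rho$-contraction. Recall from the introduction that by Okubo and Ando \cite{OA}, a $C_\rho$-contraction $T$ admits an invertible $S$ with $\|STS^{-1}\|\le 1$ and $\|S\|\,\|S^{-1}\|\le\rho$; combined with Paulsen's similarity theorem \cite{Paulsen84} this says exactly that $\ol{\bb D}$ is a complete $\rho$-spectral set for $T$.

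Granting this, I would apply Theorem~\ref{T:comp_spectral} with $K=\ol{\bb D}$, $C=\rho$, and $C'=\frac12(\rho+\rho^{-1})$ to conclude that $\ol{\bb D}$ is a complete $C'$-numerical radius set for $T$; that is, writing $\Phi_T(f)=f(T)$,
\[ w\big(\Phi_T^{(n)}(G)\big) \le \tfrac12(\rho+\rho^{-1})\,\|G\|_{\ol{\bb D}} \qforal n\ge 1, \ G\in \cl M_n(R(\ol{\bb D})). \]
Since the complement of $\ol{\bb D}$ is connected, $R(\ol{\bb D})$ is the disc algebra $A(\bb D)$, so matrix polynomials --- and more generally $M_n$-valued functions with entries in $A(\bb D)$ --- are legitimate choices of $G$. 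Taking $G=F$ and noting that $\|F\|_{\ol{\bb D}}=\|F\|_\infty$ by the maximum principle for the subharmonic function $z\mapsto\|F(z)\|$ gives the claimed bound $w(F(T))\le\frac12(\rho+\rho^{-1})\|F\|_\infty$.

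There is essentially no obstacle: all the real content has been placed in Theorem~\ref{T:main} (hence Theorem~\ref{T:comp_spectral}) and in the external results of \cite{OA} and \cite{Paulsen84}. The only point deserving a line of justification is the reduction to the disc algebra, i.e.\ identifying $R(\ol{\bb D})$ with $A(\bb D)$ and observing that matrix polynomials lie in $\cl M_n(A(\bb D))$, so that the corollary is genuinely a special case of the assertion ``$\ol{\bb D}$ is a complete $C'$-numerical radius set for $T$'' supplied by Theorem~\ref{T:comp_spectral}.
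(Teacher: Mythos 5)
Your argument is correct, but it follows a slightly different path than the paper's own proof. You package everything through Theorem~\ref{T:comp_spectral}: from Okubo--Ando you deduce that $\ol{\bb D}$ is a complete $\rho$-spectral set for $T$ (which, as you note, needs the matricial von Neumann inequality for the contraction $S^{-1}TS$ together with the easy direction of the similarity equivalence), and then you invoke the forward direction of Theorem~\ref{T:comp_spectral} and the identification $R(\ol{\bb D})=A(\bb D)$. The paper instead argues directly: it takes the explicit similarity $S$ supplied by \cite[Theorem~2]{OA}, applies the generalized von Neumann inequality to get $\|(S^{-1}\otimes I_n)F(T)(S\otimes I_n)\|\le\|F\|_\infty$, and then applies Lemma~\ref{L:num_range} to $F(T)$, bypassing Theorem~\ref{T:comp_spectral} (and hence any appeal, even implicit, to Paulsen's similarity theorem, which is only needed to manufacture a similarity that Okubo--Ando already hands you). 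Both routes rest on the same two ingredients --- the Okubo--Ando similarity and Lemma~\ref{L:num_range} --- so the mathematical content is the same; your version is the more formal ``corollary of the main theorem'' derivation, while the paper's is marginally more economical and self-contained. One small point worth making explicit in your write-up is the justification that a contraction has $\ol{\bb D}$ as a \emph{complete} spectral set (Sz.-Nagy dilation), since that is exactly the ``generalized von Neumann inequality'' the paper cites; you use it silently when asserting that Okubo--Ando plus \cite{Paulsen84} gives a complete $\rho$-spectral set.
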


\begin{proof}
By \cite[Theorem 2]{OA}, there is an invertible operator $S$ such that $\|S^{-1}TS\| \le 1$ and $\|S\|\,\|S^{-1}\| \le \rho$.
After scaling, we may suppose that $\|F\|_\infty = 1$.
Then by the generalized von Neumann inequality, we have 
\[1 \ge \| F(S^{-1}TS) \| = \|(S^{-1}\otimes I_n) F(T) (S \otimes I_n) \| .\]
Now an application of Lemma~\ref{L:num_range} yields the conclusion.
\end{proof}

The case $\rho=2$ includes all operators $T$ with $w(T) \le1$. 
This provides a matrix polynomial version of Drury's scalar inequality \cite{Drury}. 

\begin{cor}
Suppose that $T$ has $w(T) \le 1$. 
If $F:\bb D \to M_n$ is a matrix polynomial $($or has coefficients in $A(\bb D))$, then 
\[ w(F(C)) \le \frac54 \|F\|_\infty .\]
\end{cor}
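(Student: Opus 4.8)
The plan is to realize this as the special case $\rho = 2$ of Corollary~\ref{C:rho}; the only additional input needed is a statement identifying operators of numerical radius at most one as $C_\rho$-contractions for a suitable $\rho$.

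First I would recall Berger's dilation theorem, quoted in the introduction: if $w(T)\le 1$, then there is a unitary operator $U$ on a Hilbert space $\cl K \supseteq \cl H$ with $T^n = 2\,P_{\cl H}U^n|_{\cl H}$ for every $n\ge 1$. In the terminology of Sz.-Nagy and Foia\c{s} this says precisely that $T$ is a $C_\rho$-contraction with $\rho = 2$, so the hypothesis of Corollary~\ref{C:rho} is satisfied. I would then apply that corollary with $\rho = 2$: for any matrix polynomial $F:\bb D\to M_n$ (or one whose entries lie in $A(\bb D)$),
\[ w(F(T)) \le \tfrac12(\rho + \rho^{-1})\,\|F\|_\infty = \tfrac12\bigl(2 + \tfrac12\bigr)\|F\|_\infty = \tfrac54\,\|F\|_\infty, \]
which is exactly the asserted bound.

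There is no real obstacle in the argument; the one point worth flagging is that the sharp constant $\frac54$ genuinely requires routing through Berger's dilation (equivalently, through Lemma~\ref{L:num_range} applied with $C = 2$, as in the proof of Corollary~\ref{C:rho}) rather than through the crude estimate $\|F(T)\| \le 2\|F\|_\infty$, which would only yield $w(F(T)) \le 2\|F\|_\infty$. Sharpness of $\frac54$ for scalar polynomials is already recorded in the introduction via $T = \begin{sbmatrix}0&2\\0&0\end{sbmatrix}$ and the M\"obius map carrying $0$ to $1/2$, so nothing further need be said there.
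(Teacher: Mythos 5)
Your argument is correct and is exactly the route the paper takes: Berger's dilation theorem shows that $w(T)\le 1$ makes $T$ a $C_\rho$-contraction with $\rho=2$, and Corollary~\ref{C:rho} then gives the constant $\frac12\bigl(2+\frac12\bigr)=\frac54$. (You also correctly read the statement's ``$w(F(C))$'' as the typo it is, for ``$w(F(T))$''.)
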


\begin{remark}
Note that the class of operators which have the disc an a complete $2$-spectral set contains many operators which do not have numerical radius 1.
For example, let 
\[ T =  \begin{bmatrix}1/2 &3/2\\0&1/2 \end{bmatrix} \qand  S =  \begin{bmatrix}2 &0\\0& 1 \end{bmatrix} .\]
Then $\|S^{-1}TS\| = 1$ and $\|S\|\,\|S^{-1}\| = 2$ but $w(T) = 5/4$.
\end{remark}

As we mentioned in the introduction, Crouzeix showed \cite{Crouzeix} that the numerical range $W(T)$ 
is a complete $C$-spectral set for $T$ for a universal constant $C < 11.08$.
Crouzeix conjectures \cite{Crouzeix2} that the optimal constant is $2$, which is the case for a disc by \cite{OA}. 
The following are immediate from Theorem~\ref{T:comp_spectral}.

\begin{cor}
Let $T$ be a bounded operator on $\cl H$. Suppose that $W(T)$ has a complete Crouzeix constant of $C$, and let $C'= \frac12(C+C^{-1})$. 
If $F:W(T) \to M_n$ is a matrix polynomial $($or has coefficients in $A(W(T)))$, then 
\[ w(F(T)) \le C' \|F\|_{W(T)} .\]
\end{cor}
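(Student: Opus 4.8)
The plan is to read the statement off directly from Theorem~\ref{T:comp_spectral}. By definition, saying that $W(T)$ has a complete Crouzeix constant $C$ means precisely that $W(T)$ is a complete $C$-spectral set for $T$, i.e. $\|G(T)\| \le C\|G\|_{W(T)}$ for every matrix $G$ over the rational functions with poles off $W(T)$. Crouzeix's theorem guarantees that such a finite $C$ exists (universally bounded by $11.08$). First I would invoke Theorem~\ref{T:comp_spectral}: it converts this into the assertion that $W(T)$ is a complete $C'$-numerical radius set for $T$, with $C' = \tfrac12(C+C^{-1})$; that is, $w(G(T)) \le C'\|G\|_{W(T)}$ for every matrix $G$ over $R(W(T))$.

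Next I would note that since $W(T)$ is convex, its complement is connected, so (by the Walsh--Oka--Weil circle of results already invoked in the introduction) $R(W(T)) = A(W(T))$, and matrix polynomials are dense, in the supremum norm over $W(T)$, among matrices with entries in $A(W(T))$. Combined with the continuity of $F \mapsto F(T)$ and of the numerical radius, this lets the inequality pass from polynomials to any $F \colon W(T) \to M_n$ with coefficients in $A(W(T))$. Specializing the complete $C'$-numerical radius inequality to such an $F$ yields $w(F(T)) \le C'\|F\|_{W(T)}$, which is exactly the claim.

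There is essentially no genuine obstacle here: all of the mathematical content sits in Theorem~\ref{T:main} (hence Theorem~\ref{T:comp_spectral}) and in Crouzeix's theorem. The only point deserving a sentence is the reduction from rational functions with poles off $W(T)$ to the disc algebra $A(W(T))$, which is routine for $K$ with connected complement, and in particular for the convex set $W(T)$.
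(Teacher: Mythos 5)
Your proposal is correct and matches the paper, which simply declares this corollary (and the next) ``immediate from Theorem~\ref{T:comp_spectral}'': the whole content is Crouzeix's theorem plus that equivalence, and your extra remarks on passing from rational matrices to matrix polynomials or $A(W(T))$-valued $F$ via density are exactly the routine reduction the paper leaves implicit.
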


In particular, the constant $C'=5.6$ is valid. 

\begin{cor} Let $T$ be a bounded operator on $\cl H$.  Then $W(T)$ is a complete $2$-spectral set for $T$ if and only if 
\[w(F(T)) \le \frac54 \|F\|_{W(T)}\]
for every matrix polynomial $F$.
\end{cor}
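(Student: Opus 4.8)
The plan is to read this off Theorem~\ref{T:comp_spectral} with the single choice $C=2$. Then $C'=\tfrac12(2+\tfrac12)=\tfrac54$, so Theorem~\ref{T:comp_spectral} says exactly that $W(T)$ is a complete $2$-spectral set for $T$ if and only if it is a complete $\tfrac54$-numerical radius set for $T$, i.e. $w(f(T))\le\tfrac54\|f\|_{W(T)}$ for every matrix-valued $f\in R(\ol{W(T)})$. Two minor points then remain: matching the hypothesis of the corollary (which mentions only matrix polynomials $F$) with the statement of the theorem (which mentions all of $R(\ol{W(T)})$), and the harmless replacement of $W(T)$ by its closure, which is the set that is actually compact and convex; since $\|F\|_{W(T)}=\|F\|_{\ol{W(T)}}$ and $\ol{W(T)}$ contains $\sigma(T)$, nothing is lost.

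First I would handle the reduction to polynomials, which is the only part needing an argument and was already flagged in the introduction. Because $\ol{W(T)}$ is convex, its complement is connected, so by Runge's theorem polynomials are uniformly dense in $R(\ol{W(T)})$, and hence matrices of polynomials are dense in $\cl M_n(R(\ol{W(T)}))$ for each $n$. In either direction of the proof $\ol{W(T)}$ is a $C$-spectral set for $T$, so the homomorphism $\Phi_T\colon f\mapsto f(T)$ is bounded on $R(\ol{W(T)})$; consequently $F\mapsto w(F(T))$ is continuous on $\cl M_n(R(\ol{W(T)}))$, and the inequality $w(F(T))\le\tfrac54\|F\|_{W(T)}$ holds for all matrix polynomials $F$ if and only if it holds for all $F$ over $R(\ol{W(T)})$. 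Substituting this equivalence into Theorem~\ref{T:comp_spectral} yields the corollary.

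I do not expect any real obstacle; the content is entirely in Theorems~\ref{T:main} and~\ref{T:comp_spectral}. If one preferred to argue directly rather than quoting Theorem~\ref{T:comp_spectral}: if $\ol{W(T)}$ is a complete $2$-spectral set then $\|\Phi_T\|_{cb}\le2$, so Theorem~\ref{T:main} gives $\|\Phi_T\|_{wcb}=\tfrac12(\|\Phi_T\|_{cb}+\|\Phi_T\|_{cb}^{-1})\le\tfrac54$ since $x\mapsto\tfrac12(x+x^{-1})$ is increasing on $[1,\infty)$; conversely the stated numerical radius bound together with $\|A\|\le 2w(A)$ forces $\|\Phi_T\|_{cb}\le\tfrac52$ (in particular $\Phi_T$ is completely bounded), whence $\tfrac54\ge\|\Phi_T\|_{wcb}=\tfrac12(\|\Phi_T\|_{cb}+\|\Phi_T\|_{cb}^{-1})$, and since this function is strictly increasing on $[1,\infty)$, takes the value $\tfrac54$ at $2$, and $\|\Phi_T\|_{cb}\ge1$ by unitality, we conclude $\|\Phi_T\|_{cb}\le2$.
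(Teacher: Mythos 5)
Your proposal is correct and matches the paper's route: the paper simply declares this corollary ``immediate from Theorem~\ref{T:comp_spectral}'' with $C=2$, $C'=\tfrac54$, relying (as you do) on the remark in the introduction that for a set with connected complement, such as the convex set $\ol{W(T)}$, it suffices to verify the inequalities on matrix polynomials. Your extra details on the polynomial-to-$R(\ol{W(T)})$ reduction and the passage to the closure of $W(T)$ only flesh out what the paper leaves implicit, and your alternative direct argument via Theorem~\ref{T:main} is just an unwinding of the proof of Theorem~\ref{T:comp_spectral}.
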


Thus Crouzeix's conjecture is true for the norm case if and only if the above $5/4$'s inequality holds for every operator $T$. Also, we know that $2$ and $\frac54$ are the best possible constants in each case.

\bigskip

{\em Acknowledgment.} The research was conducted while the third author was
visiting the Institute for Quantum Computing at the University of Waterloo.
He gratefully acknowledges the hospitality of many at the University of Waterloo, including
his gracious host Vern Paulsen.

\end{document}